\newtheorem{df}{Definition}[section]
\newtheorem{thm}[df]{Theorem}
\newtheorem{pro}[df]{Proposition}
\begin{document}
\setcounter{page}{1}

\title[Joint Spectral Radius]{On The joint spectral radius of\\ a nilpotent Lie algebra of matrices}

\author{Enrico Boasso}

\begin{abstract}For a complex nilpotent finite dimensional Lie algebra of matrices,
and a Jordan-H\"older basis of it, we prove a spectral radius formula which
extends  a well-known result for commuting matrices. \end{abstract}
\maketitle

\section{Introduction}

\indent Let $T=(T_1,\ldots ,T_n)$ be an $n$-tuple of $d\times d$-complex matrices. 
A point $\lambda\in\Bbb C^n$ is in the joint point spectrum of $T$, $\sigma_{pt}(T)$,
if there exists a nonzero  element $x\in\Bbb C^d$ with the property $T_i(x)=\lambda_ix$, 
$1\le i\le n$. Given $p$ such that $1\le p\le \infty$, R. Bhatia and T. Bhattacharyya introduced in [1] the algebraic spectral radius 
of an $n$-tuple $T$, $\varrho_p(T)$, whose definition depends of the usual $p$-norm
of $\Bbb C^d$, and proved that if $T$ is an $n$-tuple of commuting matrices,
then the algebraic spectral radius coincides with the geometrical spectral radius, i.e.,
$\varrho_p(T)=r_p(T)=\hbox{max }\{\mid \lambda\mid_p\colon \lambda\in \sigma_{pt}(T)\}$ 
(see [1] or Section 2 for more details). This is a generalization 
of the well-known spectral radius formula for a single matrix;
for $p=2$, it was proved by M. Ch\={o} and T. Huruya in [6].\par

\indent M. Ch\={o} and M. Takaguchi 
proved in [7] that if $T$ is a commuting $n$-tuple of matrices, then $\sigma_{pt}(T)=Sp(T,{\Bbb C}^d)$, where $Sp(T,{\Bbb C}^d)$ denotes the Taylor joint
spectrum of $T$ (see [12]). A. McIntosh, A. Pryde and W. Ricker, as a consequence
of a more general result which also concerns infinite dimensional spaces, extended in [9]
the above identity to many other joint spectra including  the commutant, the bicommutant
and the Harte joint spectra.\par

\indent On the other hand, in [4] we defined a joint spectrum, $Sp(L,E)$, for complex solvable finite 
dimensional Lie algebras $L$ of operators acting on a Banach space $E$. We proved that $Sp(L,E)$ is a compact nonempty
subset of $L^*$ satisfying  the projection property for ideals. 
Moreover, when $L$ is a commutative algebra, $Sp(L,E)$ reduces to the Taylor 
joint spectrum in the following sense. If $\dim L = n$ and $\{ T_i\}_{(1\le i\le n)}$
is a basis of $L$, then 
$\{(f(T_1),\ldots ,f(T_n))\colon f\in Sp(L,E)\} = Sp(T,E)$ for $T=(T_1, \ldots , T_n)$ i.e., $Sp(L,E)$ in terms of the
basis of $L^*$ dual to $\{T_i\}_{(1\le i\le n)}$ coincides with the Taylor
joint spectrum of the n-tuple $T$. Furthermore, in [3] we also extended to complex
solvable finite dimensional Lie algebras the S\l odkowski joint spectra $\sigma_{\delta ,k}$ and $\sigma_{\pi ,k}$,
and we proved the most important properties of a spectral theory, i.e., the compactness, 
nonemptiness and the projection property for ideals.\par  

\indent For $E$ finite dimensional, in [3] we extended the characterization of [7], and 
partially that of [9], to complex nilpotent Lie algebras acting on $E$.
Indeed, we proved that $Sp(L,E)$, $\sigma_{\delta ,k}(L,E)$ and $\sigma_{\pi ,k}(L,E)$ coincide with the set of all weights of the Lie algebra 
$L$ for the vector space $E$, which is the intrinsic algebraic description of 
the joint point spectrum, and showed that in suitable bases of $L$ and $L^*$ it reduces
to the joint point spectrum of the basis of $L$ (see [3] or Section 2 for details). This also extended the characterization 
of [7] and [9] to the S\l odkowski joint spectra of an $n$-tuple of commuting matrices. \par

\indent Thus, if $L$ is a complex nilpotent finite dimensional Lie algebra acting on a 
complex finite dimensional vector space $E$, and if instead of considering the elements of $Sp(L,E)$ as linear 
functionals on $L$ we work with their coordinates in a basis of $L^*$,
dual to a suitable basis of $L$, then, as in the former basis $Sp(L,E)$ reduces to
the joint point spectrum of the latter basis, it is possible to compute the geometrical and the algebraic spectral radii
of $L$ with respect to its basis, and to look for a generalization of the main
result of [1]. In this article we extend the spectral radius formula of R. Bhatia and T. Bhattacharyya
for the case under consideration. The argument is quite elementary, and it furnishes another proof of the  formula for the commutative case.\par

\indent The paper is organized as follows. In Section 2 we review several definitions
and results which we need. In Section 3, we prove our main theorem and
study some examples in order to show that in the solvable non nilpotent case the spectral radius
formula fails.
                                            
\section{Preliminaries}
\indent We briefly recall several definitions and results related to the 
spectrum of a Lie algebra(see [4]). Although in [4] we considered complex 
solvable finite dimensional Lie algebras of linear bounded 
operators acting on a Banach space, for our purpose we restrict ourselves to 
the case of complex finite dimensional nilpotent Lie algebras of linear transformations 
defined on finite dimensional vector spaces. Moreover, as in this case the S\l odkowski joint spectra and the Taylor joint spectrum coincide,
we concentrate on the latter; for more information about
the S\l odkowski joint spectra see [11] and [2].\par

\indent From now on $E$ denotes a complex finite dimensional vector space,
${\mathcal L}(E)$ the algebra of all linear transformations defined on $E$, and 
$L$ a complex nilpotent finite dimensional Lie subalgebra of 
${{\mathcal L}(E)}^{op}$,  the algebra ${{\mathcal L}(E)}$ with opposite product. 
Such an algebra is called a nilpotent Lie algebra of 
linear transformations in $E$. If $ \dim L= n$ and $f$ is a 
character of $L$, i.e., $f\in L^*$ and $f(L^2) = 0$, where $L^2 = \{ [x,y]\colon x,
 y \in L\}$, consider the   chain complex
$(E\otimes\wedge L, d(f))$, where $\wedge L$ denotes
the exterior algebra of $L$ and
$$
d_p (f)\colon E\otimes\wedge^p L\rightarrow E\otimes\wedge^{p-1} L,   
$$

$$
d_p (f) e\langle x_1\wedge\dots\wedge x_p\rangle  = \sum_{k=1}^p (-1)^{k+1}e(x_k-f(x_k))\langle x_1\wedge\ldots\wedge\hat{x_k}\wedge\dots\wedge x_p\rangle
$$
                                                        $$ + \sum_{1\le k< l\le p} (-1)^{k+l}e\langle [x_k, x_l]\wedge x_1\wedge\ldots\wedge\hat{x_k}\wedge\ldots\wedge\hat{x_l}\wedge\ldots\wedge  x_p\rangle ,$$
where $\hat{ }$ means deletion. If $p\le 0$ or $p\ge {n+1}$, 
we  define $d_p (f) =0$.\par
 
\indent  If we denote by $H_*((E\otimes\wedge L,d(f)))$ the homology of the complex
$(E\otimes\wedge L, d(f))$, we may state our first definition.\par

\begin{df} With $E$, $L$ and $f$ as above, the
set $\{f\in L^*\colon f(L^2) =0, H_*((E\otimes\wedge L,d(f)))\neq 0\}$ is the joint spectrum
of $L$ acting on $E$, and it is denoted by $Sp(L,E)$.\end{df}

\indent As already mentioned, in [4] we proved that $Sp(L,E)$ is a compact nonempty
subset of $L^*$, which reduces, in the sense explained in the Introduction, to the Taylor joint spectrum when $L$ is a 
commutative algebra. Moreover, if $I$ is an ideal of $L$, and $\pi$ denotes 
the projection map from $L^*$ to $I^*$, then,
$$
Sp(I,E) = \pi (Sp(L,E)),
$$

\noindent i.e., the projection property for ideals still holds. In this connection, we wish to mention the paper of C. Ott ([10]) who pointed out
a gap in [4] in the proof of this result and gave another proof.\par

\indent We recall the most important results of the theory of weight spaces, essentially Theorem 7 and 12, Chapter II of [8]. For a complete exposition see [8, Chapter II].\par

\indent Let $L$ and $E$ be as above. A weight of $L$ for $E$ is a mapping,
$\alpha: L \to \mathbb{C}$ such that there exists a non zero
vector $v$ in $E$ with the following property: for each $x$ in $L$ there is $m_{v,x}$ in $\mathbb{N}$ such that $(x-\alpha (x))^{m_{v,x}}(v)= 0$. 
The set of vectors, zero included, which satisfy this condition is a subspace of $E$, denoted by $E_{\alpha}$ and called the
weight space of $E$ corresponding to the weight $\alpha$.

\indent Under our assumptions we have the following properties
(see [8, Chapter II, Theorems 7 and 12]):\par
\noindent (i) the weights are linear functions on $L$ which vanish on $L^2$, 
i.e.,  they are characters of $L$,\par
\noindent (ii) $E$ has only a finite number of distinct weights; the weight 
spaces are submodules, and $E$ is the direct sum of them,\par
\noindent (iii) for each weight $\alpha$, the restriction of any $x \in L$ to
$E_{\alpha}$ has only one characteristic root, $\alpha (x)$, with certain
multiplicity,\par
\noindent (iv) there is a basis of $E$ such that for each weight $\alpha$  and each $x\in L$ the
matrix of $x_{\alpha}$, the restriction of $x$ to $E_{\alpha}$, is
$$
x_{\alpha}=\begin{pmatrix}
                    \alpha (x)&*\\
                    0&\alpha (x)\\
                                       \end{pmatrix}.
$$         
\indent We now recall the following theorem of [3], which will
be crucial for our main result.\par

\noindent {\bf Theorem A} {\it Let $E$ be a complex finite dimensional vector space, and 
$L$ a complex fintite dimensional nilpotent Lie subalgebra of ${\mathcal L}(E)^{op}$. Then, }
$$
Sp(L,E)=\{\alpha\in L^*\colon \alpha\hbox{ is a weight of L for E}\}.
$$

\indent We observe that the right hand side set is a generalization
of the joint point spectrum.
In fact, if we consider a commutative algebra $L$, and  $T=(T_1,\ldots ,T_n)$ is an $n$-tuple of matrices such that $\{T_i\}_{(1\le i\le n)} $
is a basis of $L$, then the set of weights of $L$ for $E$ rapresented in terms of the basis
of $L^*$ dual to $\{T_i\}_{(1\le i\le n)}$ coincides with $\sigma_{pt}(T)$.\par

\indent As we shall work with the coordinates of elements of
$Sp(L,E)$, we need to construct suitable bases for $L$ and $L^*$. \par

\indent According to [5, Chapter IV, Section 1, Proposition 1], there is a Jordan-H\"older sequence of ideals, $(L_i)_{0\le i\le n}$,
such that:\par
\noindent (i) $\{0\}= L_{0}\subseteq L_i\subseteq L_n= L$,\par
\noindent (ii) $\dim L_i= i$,\par
\noindent (iii) there is a $k$ ($0\le k\le n$) such that $L_k= L^2$,\par
\noindent (iv) if $i< j$, then $[L_i ,L_j]\subseteq L_{i-1}$.\par
\indent As a consequence, if $\{ x_j\}_{1\le j\le n}$ is a basis of $L$
such that that $\{ x_j\}_{1\le j\le i}$ is a basis
of $L_i$ for each $i$, then
$$
[x_j,x_i]=\sum_{h=1}^{i-1} c^h_{ij}x_h,
$$
where $1\le i<j\le n$. Such a basis will be called a Jordan-H\"older basis. \par

\indent Now, given a Jordan-H\"older basis and an $\alpha\in Sp(L,E)$, we may 
represent it by its coordinates in the dual basis of $L^*$, i.e., by $(\alpha(x_1),\ldots , \alpha(x_n))$. The set of all such $n$-tuples
will be denoted by $Sp((x_i)_{1\le i\le n},E)$, i.e., 
$$
Sp((x_i)_{1\le i\le n},E)=\{(\alpha(x_1),\ldots , \alpha(x_n))\colon\alpha\in Sp(L,E) \}.
$$

\indent We observe that $Sp((x_i)_{1\le i\le n},E)=\sigma_{pt}((x_i)_{1\le i\le n})$.\par

\indent Recall that when $L$ is a solvable Lie algebra, by [5, Chapter V, Section 1, Proposition 2] we
may also construct a sequence  $(L_i)_{0\le i\le n}$ of ideals with properties (i),
(ii) and (iii), and property (iv)': if $i<j$, $[L_i,L_j]\subseteq L_i$. Thus, if $\{x_j\}_{1\le j\le n}$ is a basis of $L$
such that $\{x_j\}_{1\le j\le i}$ is a basis of $L_i$ for each $i$,
then, 
$$[x_j,x_i]=\sum_{h=1}^ic^h_{ij}x_h,$$ 
where $1\le i< j\le n$. As in the 
nilpotent case, such a basis will be called a Jordan-H\"older basis of the solvable Lie algebra $L$.\par

\indent We now review the geometric and the algebraic spectral radius,
as  defined in [1]. \par

\indent Let  $E_p$, $1\le p\le\infty$, denote the 
space $\mathbb{C}^{d}$ provided with the usual $p$-norm,
\begin{align*}
&\parallel x\parallel_p=(\sum_{i=1}^d \mid x_i\mid^p)^{1/p},\hskip2cm 1\le p<\infty,\\
&\parallel x\parallel_{\infty} =\sup_{1\le i\le d}\mid x_i\mid,\\
\end{align*}
where $x\in \mathbb{C}^d$.\par

On the other hand, for $\lambda\in \mathbb{C}^n$, we denote its $p$-norm by $\mid \lambda\mid_p$,
$1\le p\le \infty$.\par

\indent If $T=(T_1,\ldots ,T_n)$ is an $n$-tuple of $d\times d$-complex matrices,
then the geometric spectral radius of $T$ is defined as
$$
r_p(T)=max\{\mid\lambda\mid_p\colon \lambda\in \sigma_{pt}(T)\}.
$$

On the other hand, we may identify a matrix $M$ with the associated linear transformation, 
which we also denote by $M$. Thus, the $n$-tuple $T$ induces an operator from $E_p$ to the direct
sum of $n$ copies of $E_p$, considered with its natural $p$-norm. The norm of this operator,
also denoted by $T$, is 
$$
\parallel T\parallel_p=\sup_{\parallel x\parallel_p\le 1}(\sum_{j=1}^n \parallel T_j(x)\parallel_p^p)^{1/p}.
$$

\indent Now, given an $m\in \mathbb{N}$, we consider the $n^m$-tuple whose entries are
the products $T_{i_1}\ldots T_{i_m}$, where $1\le i_1,\ldots ,i_m\le n$, with repetitions
allowed and the indices arranged lexicographically. We denote this $n^m$-tuple 
by $T^m$. Them, the algebraic spectral radius of the $n$-tuple $T$ is defined as
$$
\varrho_p (T)=\hbox{\rm inf}\parallel T^m\parallel_p^{1/m},\hskip2cm 1\le p\le\infty.
$$

\indent As already mentioned, R. Bhatia and  T. Bhattacharyya proved in [1] that for
a commuting $n$-tuple $T$ of matrices  the algebraic and the geometrical spectral radius coincide. Now, given a nilpotent Lie algebra $L $ of matrices in ${\mathcal L}(\mathbb{C}^{d})^{op}$,
and  a Jordan-H\"older basis $\{ x_i \}_{1\le i\le n}$ of $L$, where $n$ is $\dim L$,
we may consider the $n$-tuple $(x_i)_{1\le i\le n}$. In the next section we
shall see that the geometric and the algebraic spectral radius of $(x_i)_{1\le i\le n}$ also coincide. \par

\section{The Main Result}

 \indent In this section we prove that the spectral radius formula proved in [1] 
extendes to nilpotent Lie algebras of matrices. \par

\indent Observe that if $T=(T_1,\ldots ,T_n)$ is a commuting $n$-tuple of matrices, 
and $U$ is an invertible matrix such that $UTU^{-1}=(UT_1U^{-1},\ldots ,UT_n U^{-1})$
is an $n$-tuple of commuting upper-triangular matrices, then $\sigma_{pt}(T)=\sigma_{pt}(UTU^{-1})$,
$r_p(T)=r_p(UTU^{-1})$ and $\varrho_p(T)=\varrho_p(UTU^{-1})$; see [1] of Proposition 2
below. If we decompose the matrices $UT_i U^{-1}$, $1\le i\le n$, in its diagonal and nilpotent part,
we have two new commuting $n$-tuples of matrices, $D$ and $N$, respectively. The spectral radii of $T$ coincide with the corresponding
radii of $D$. This suggests that in the computations 
of the spectral radii the nilpotent parts of the matrices are not of great importance. 
In this section, in order to prove our main theorem, we consider an $n$-tuple
of upper-triangular matrices with some additional properties, and we give an upper bound for the associated $n$-tuple $N$ in order to show that
the algebraic and the geometrical spectral radii of $T$ coincide. As the nilpotent and the commuting case
may be reduced to this situation, we obtain our main result as well as a new
proof for commuting case. \par

\indent Let us begin with two propositions which smplify our work.\par

\begin{pro} Let $T$ be an $n$-tuple of $d\times d$-matrices. Then 
$$
r_{\infty}(T)=\lim_{p\to \infty} r_p(T),\hskip2cm\varrho_{\infty}(T)=\lim_{p\to \infty}\varrho_p(T).
$$ 
\end{pro}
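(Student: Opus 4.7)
My plan is to reduce both equalities to the elementary finite-dimensional comparison $\|y\|_\infty\le\|y\|_p\le k^{1/p}\|y\|_\infty$, valid for every $y\in\mathbb{C}^k$, and then to analyze carefully how $\max$ (respectively $\inf$) interacts with this estimate in the limit $p\to\infty$.

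For the first identity I would proceed as follows. The joint point spectrum $\sigma_{pt}(T)$ is a \emph{finite} subset of $\mathbb{C}^n$, since each coordinate of a joint eigenvalue must lie in the finite set $\sigma(T_i)$. For every individual $\lambda\in\sigma_{pt}(T)$, the number $|\lambda|_p$ is continuous in $p$ on $[1,\infty]$ and converges to $|\lambda|_\infty$ as $p\to\infty$. Since $r_p(T)$ is the maximum of this fixed, finite family of continuous functions, the $\max$ commutes with the limit and $r_p(T)\to r_\infty(T)$ is immediate.

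For the second identity the starting point is the operator-norm comparison
$$l^{-1/p}\|M\|_\infty\le\|M\|_p\le k^{1/p}\|M\|_\infty,$$
valid for any matrix $M$ regarded as an operator from $\mathbb{C}^l$ to $\mathbb{C}^k$, both equipped with the $p$-norm. The upper bound follows from $\|Mx\|_p\le k^{1/p}\|Mx\|_\infty$ combined with $\|x\|_p\ge\|x\|_\infty$, and the lower bound from $\|Mx\|_p\ge\|Mx\|_\infty$ combined with $\|x\|_p\le l^{1/p}\|x\|_\infty$. Applying these estimates with $M=T^m$ (so that $k=dn^m$ and $l=d$) and then raising to the $1/m$-th power yields
$$d^{-1/(pm)}\|T^m\|_\infty^{1/m}\le\|T^m\|_p^{1/m}\le (dn^m)^{1/(pm)}\|T^m\|_\infty^{1/m}.$$

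The delicate step, which I expect to be the main obstacle, is the interchange of $\inf_m$ with the limit $p\to\infty$, since a priori the optimal $m$ in the definition of $\varrho_p(T)$ may depend on $p$. I would treat the two directions asymmetrically. For the $\limsup$, fix $m$ and let $p\to\infty$ in the upper estimate, obtaining $\limsup_{p\to\infty}\varrho_p(T)\le\|T^m\|_\infty^{1/m}$, and then take $\inf_m$ on the right. For the $\liminf$, the key observation is that $d^{-1/(pm)}\ge d^{-1/p}$ holds for all $m\ge 1$ (and $d\ge 1$), so the lower estimate becomes the \emph{$m$-uniform} inequality $\|T^m\|_p^{1/m}\ge d^{-1/p}\|T^m\|_\infty^{1/m}$; taking $\inf_m$ on both sides gives $\varrho_p(T)\ge d^{-1/p}\varrho_\infty(T)$, whence $\liminf_{p\to\infty}\varrho_p(T)\ge\varrho_\infty(T)$. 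Combining the two directions completes the proof of the second identity.
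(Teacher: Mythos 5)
Your proposal is correct and follows essentially the same route as the paper: both reduce everything to the finite-dimensional comparison $\|y\|_\infty\le\|y\|_p\le k^{1/p}\|y\|_\infty$, applied to the (finite) set of joint eigenvalues for $r_p$ and to the operator norms of $T^m$ for $\varrho_p$, and then squeeze as $p\to\infty$. Your explicit treatment of the interchange of $\inf_m$ with the limit (using that the lower constant $d^{-1/(pm)}\ge d^{-1/p}$ is uniform in $m$) is a welcome point of care that the paper passes over more quickly, but it is the same argument.
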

\begin{proof}

\indent If $x\in\mathbb{C}^q$, $q\in \mathbb{N}$, then,
$$
\parallel x\parallel_{\infty}\le\parallel x\parallel_p\le q^{1/p}\parallel x\parallel_{\infty}.
$$
In particular, by the definition of $r_p(T)$, 

$$
r_{\infty}(T)\le r_p(T)\le n^{1/p} r_{\infty}(T),
$$
which gives us the first part of the proposition.\par
\indent On the other hand, an easy calculation using  the above inequality
yields
$$
\parallel T\parallel_p\le (dn)^{1/p}\parallel T\parallel_{\infty}, \hskip2cm\parallel T\parallel_{\infty}\le \parallel T\parallel_p d ^{1/p}.
$$
Then, if $m\in \mathbb{N}$, we obtain 
$$
\parallel T^m\parallel_{\infty}\le\parallel T^m\parallel_p d^{1/p}\le (d^2n^m)^{1/p}\parallel T^m\parallel_{\infty},
$$
which implies that,
$$
\parallel T^m\parallel_{\infty}^{1/m}\le\parallel T^m\parallel_p^{1/m}d^{1/pm}\le d^{2/pm}n^{1/p}\parallel T^m\parallel^{1/m}_{\infty}.
$$

Thus,
$$
\varrho_{\infty}(T)\le \varrho_p(T)\le n^{1/p}\parallel T\parallel_{\infty},
$$
which shows that
$$
\lim_{p\to\infty}\varrho_p(T)=\varrho_{\infty}(T).
$$
\end{proof}
\indent Thus we may restrict our proof to the case $1\le p<\infty$.

\begin{pro} Let $T$ be an $n$-tuple of matrices in $E_p$, and $U$
an invertible matrix in $E_p$. Set $UTU^{-1}=(UT_1U^{-1},\ldots , UT_nU^{-1})$,
Then
$$
r_p(T)=r_p(UTU^{-1}), \hskip2cm \varrho_p(T)=\varrho_p(UTU^{-1}).
$$
\end{pro}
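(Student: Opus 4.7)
The plan is to treat the two identities separately. The equality $r_p(T)=r_p(UTU^{-1})$ follows at once from the conjugation-invariance of the joint point spectrum: $T_i x=\lambda_i x$ for $1\le i\le n$ if and only if $(UT_iU^{-1})(Ux)=\lambda_i(Ux)$, and $x\neq 0\iff Ux\neq 0$, so $\sigma_{pt}(T)=\sigma_{pt}(UTU^{-1})$ and the identity drops out of the definition of $r_p$.

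For the algebraic spectral radius, the key observation is that the entries of the $n^m$-tuple $(UTU^{-1})^m$ are exactly $UT_{i_1}\cdots T_{i_m}U^{-1}$, by telescoping cancellation of the interior $U^{-1}U$ factors. The substitution $y=U^{-1}x$ in the defining supremum then yields
$$
\|(UTU^{-1})^m\|_p\le C\,\|T^m\|_p,\qquad C:=\|U\|_p\,\|U^{-1}\|_p,
$$
and hence $\|(UTU^{-1})^m\|_p^{1/m}\le C^{1/m}\,\|T^m\|_p^{1/m}$ for every $m$.

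Since $C^{1/m}\to 1$ only in the limit, to conclude $\varrho_p(UTU^{-1})\le\varrho_p(T)$ I would first show that the infimum defining $\varrho_p$ is actually a limit. This reduces to the submultiplicativity $\|T^{m+k}\|_p\le\|T^m\|_p\,\|T^k\|_p$, which I would establish by splitting each length-$(m+k)$ word into a leading block of length $m$ and a trailing block of length $k$, applying the definition of $\|T^m\|_p$ to the inner sum over leading blocks for each fixed trailing block, and then the definition of $\|T^k\|_p$ to the resulting outer sum over trailing blocks. Fekete's lemma gives $\|T^m\|_p^{1/m}\to\varrho_p(T)$, and passing to the limit in the displayed inequality produces $\varrho_p(UTU^{-1})\le\varrho_p(T)$; the reverse inequality follows by symmetry, writing $T=U^{-1}(UTU^{-1})U$. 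The main obstacle is precisely this passage from infimum to limit: the raw bound $\|(UTU^{-1})^m\|_p\le C\|T^m\|_p$ alone gives only $\varrho_p(UTU^{-1})\le\inf_m C^{1/m}\|T^m\|_p^{1/m}$, which need not equal $\varrho_p(T)$ unless one knows that small values of $\|T^m\|_p^{1/m}$ occur for arbitrarily large $m$.
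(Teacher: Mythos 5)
Your proposal is correct and follows essentially the same route as the paper: conjugation-invariance of $\sigma_{pt}$ for the geometric radius, and the identity $(UTU^{-1})^m=U(T^m)U^{-1}$ together with the bound $\parallel (UTU^{-1})^m\parallel_p\le \parallel U\parallel_p\parallel U^{-1}\parallel_p\parallel T^m\parallel_p$ for the algebraic one (the paper writes the constant as $\parallel U\parallel_p k^{-1}$ with $U^{-1}(B[0,k])\subseteq B[0,1]$, which is the same thing). The one point where you go beyond the paper is the passage from $\parallel (UTU^{-1})^m\parallel_p^{1/m}\le C^{1/m}\parallel T^m\parallel_p^{1/m}$ to $\varrho_p(UTU^{-1})\le\varrho_p(T)$: the paper asserts this directly, implicitly relying on the fact that the infimum defining $\varrho_p$ is actually a limit (established in [1] via submultiplicativity), whereas you prove the needed submultiplicativity $\parallel T^{m+k}\parallel_p\le\parallel T^m\parallel_p\parallel T^k\parallel_p$ by the block-splitting argument and invoke Fekete's lemma explicitly. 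Your concern is legitimate---with the bare infimum definition the step does require this justification---so your write-up is, if anything, more complete than the paper's.
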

\begin{proof}

\indent As $\sigma_{pt}(UTU^{-1})=\sigma_{pt}(T)$, we have $r_p(UTU^{-1})=r_p(T)$.\par

\indent For  the algebraic spectral radius, we first observe that  $(UTU^{-1})^m=U(T^m)U^{-1}$ for all
$m\in\Bbb N$ . Thus, if $k$ is such that
$U^{-1}(B[0,k])\subseteq B[0,1]$, then, $\parallel (UTU^{-1})^m\parallel_p\le \parallel U\parallel_p\parallel T^m\parallel_p k^{-1}$,
which gives 
$$
\varrho_p(UTU^{-1})\le\varrho_p(T).
$$
However, as $T= U^{-1}(UTU^{-1})U$, we obtain the desired equality.\par
\end{proof}

\indent From now on we consider an $n$-tuple $T$ of matrices  such that $\mathbb{C}^d$ may
be decomposed into a direct sum of linear subspaces,  $\mathbb{C}^d=\oplus_{1\le i\le s}M_s$,
such that for each $i$, $1\le i\le n$, the linear transformation associated
with $T_i$ satisfies  $T_i(M_j)\subseteq M_j$, for $1\le j\le s$.
In addition, we assume that for each $j$ there is a basis of $M_j$ in which the matrix of $T_i\mid M_j $ has an upper triangular form for all $i$.
Moreover, we also assume that all the diagonal entries of $T_i\mid M_j$ coincide. 
By Proposotion 2 we may suppose that the above basis is the canonical one and 
that each $M_i$ is generated by elements of the canonical basis of $\mathbb{C}^d$.\par

\indent A straightforward calculation shows that for such $T$ we have  
$$
\sigma_{pt}(T)=\{(c^1_j,\ldots ,c^n_j)\colon 1\le j\le s\},$$
\noindent where $c_j^{i}$ denotes the diagonal entries of $T_i\mid M_j$ in the above basis of $M_j$.\par

\indent On the other hand, by  the theory of weight
spaces reviewed in Section 2, if $\{ x_i\}_{1\le i\le n}$ is a Jordan-H\"older basis of an $n$-dimensional nilpotent Lie algebra of linear
transformations defined on a complex finite dimensional vector space, then the $n$-tuple
$(x_i)_{1\le i\le n}$ clearly satisfy the above conditions.\par

\indent Moreover, if $T=(T_1,\ldots ,T_n)$ is a commuting $n$-tuple of $d\times d$-complex
matrices, and if $\tilde L$ is the $\mathbb{C}$-vector subspace of $\mathcal{L} (\mathbb{C}^d)$
generated by $T_i$, $1\le i\le n$, then $\tilde L$
is a commuting Lie subalgebra of $\mathcal{L} (\mathbb{C}^d)$ and $\mathcal {L} (\Bbb C^d)^{op}$,
in particular, $\tilde L$ is a nilpotent Lie subalgebra of $\mathcal {L}(\mathbb{C}^d)^{op}$.
Thus, we may apply the weight space theory to $\tilde L$ and $\mathbb{C}^d$ to
obtain a subspace decomposition  and a basis of $\mathbb{C}^d$ in which
the above conditions are satisfied by $T$. Indeed, as the properties (i)-(iv) of
the theory of weight spaces reviewed in Section 2 are satisfied by each $x$ in $\tilde L$, 
they are, in particular, satisfied by $T_i$, $1\le i\le n$. This approach
gives a
more precise description of the joint spectrum of an $n$-tuple of commuting matrices,
refining those  [1], [7] and [9]. \par

\indent Now denote by $D=(D_1,\ldots ,D_n)$, respectively $N=(N_1,\ldots ,N_n)$, the $n$-tuple 
of the diagonal, respectively nilpotent, parts of the matrices $T_i$, $1\le i\le n$. As $\sigma_{pt}(D)
=\sigma_{pt}(T)$, we have $r_p(D)=r_p(T)$. Furthermore, as $D$ is an $n$-tuple of commuting
matrices, by [1] Lemma 6, $\varrho_p (D)=r_p(D)$.\par

\indent If $s=1$ and if $D_i=c^{i}Id$, $1\le i\le n$, then
an easy calculation gives
$$
r_p(D)^m=\parallel (c^1,\ldots ,c^n)\parallel_p^m=(\sum_{(i_1,\ldots ,i_m)\in I_m}\prod^m_{k=1}\mid c^{i_k}\mid^p)^{1/p}=\parallel D^m\parallel_p,
$$
where $I_m=\{(i_1,\ldots ,i_m)\colon 1\le i_k\le n \hbox{ for } 1\le k\le m\}$.\par
\indent We now start the proof of our main result: for an $n$-tuple $T$
which satisfy the above conditions, $r_p(T)=\varrho_p (T)$.\par

\begin{pro} Let $T$, $D$ and $N$ be as above. Then,  
$$
r_p(T)\le \varrho_p(T). 
$$
\end{pro}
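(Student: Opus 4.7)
The plan is to exploit the hypothesis that on each block $M_j$ all the matrices $T_i$ are simultaneously upper-triangular with constant diagonal $c_j^i$. In this situation the first vector of the common triangularising basis of $M_j$ is an honest simultaneous eigenvector for the whole tuple, and this one vector will be enough to force the lower bound $\|T^m\|_p \ge |\lambda_j|_p^m$, from which the result follows by taking $m$-th roots and infima.

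First I would fix $j$ with $1\le j\le s$ and let $v_j$ be the first element of the distinguished basis of $M_j$. Because $T_i\mid M_j$ is upper-triangular with diagonal $c_j^i$, one has $T_i(v_j)=c_j^i v_j$ for every $i$, so $v_j$ is a common eigenvector of the whole $n$-tuple with eigenvalue $\lambda_j=(c_j^1,\ldots ,c_j^n)$. A trivial induction on $m$ then gives
$$
T_{i_1}\cdots T_{i_m}(v_j)=c_j^{i_1}\cdots c_j^{i_m}\, v_j,
$$
for every multi-index $(i_1,\ldots ,i_m)\in I_m$.

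Next I would insert $x=v_j/\parallel v_j\parallel_p$ into the defining supremum for $\parallel T^m\parallel_p$. Since the summands decouple into a product of $|c_j^{i_k}|^p$'s, the sum factors as an $m$-fold product:
$$
\parallel T^m\parallel_p^p\;\ge\;\sum_{(i_1,\ldots ,i_m)\in I_m}\prod_{k=1}^{m}|c_j^{i_k}|^p\;=\;\Bigl(\sum_{i=1}^n|c_j^i|^p\Bigr)^{m}\;=\;|\lambda_j|_p^{pm}.
$$
Taking $m$-th roots yields $\parallel T^m\parallel_p^{1/m}\ge |\lambda_j|_p$ for every $m$, hence $\varrho_p(T)\ge |\lambda_j|_p$. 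Since $j$ was arbitrary and $\sigma_{pt}(T)=\{\lambda_1,\ldots ,\lambda_s\}$ by the description of the joint point spectrum recorded just before Proposition~2.3, maximising over $j$ gives $\varrho_p(T)\ge r_p(T)$.

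There is no real obstacle here: the work was done already by the hypothesis of simultaneous upper-triangularisation with \emph{constant} diagonals on each block, which is exactly what produces the common eigenvector $v_j$. The only point worth verifying with care is that the sum over $I_m$ factors cleanly as $(\sum_i|c_j^i|^p)^m$; this is a routine distributivity computation, and it is the mechanism by which the $p$-norm of the eigenvalue tuple is recovered from the $p$-norm of $T^m$. The reverse inequality $\varrho_p(T)\le r_p(T)$, which completes the proof of the main theorem, is a separate matter and presumably the harder half, to be handled in a subsequent proposition via a bound on the nilpotent parts $N_i$.
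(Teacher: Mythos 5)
Your proof is correct and follows essentially the same route as the paper: both arguments plug the common eigenvector of the block $M_j$ (which exists because the $T_i\mid M_j$ are simultaneously upper triangular with constant diagonal $c_j^i$) into the defining supremum for $\parallel T^m\parallel_p$ and use the factorization $\sum_{I_m}\prod_k\mid c_j^{i_k}\mid^p=(\sum_i\mid c_j^i\mid^p)^m$. The only cosmetic difference is that the paper works directly with the maximizing block $j_0$, whereas you argue for every $j$ and then maximize.
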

\begin{proof}

\indent Suppose that $D_i\mid M_j=c^{i}_j Id_j$ for all $1\le i\le n$, and $j$, $1\le j\le s$, where
 $Id_j$ denotes the identity of $M_j$. Then, as $r_p(T)=r_p(D)$, and as $D_i$ is the diagonal part of $T_i$,  if $(c_{j_0}^1,\ldots ,c_{j_0}^n)$ is such that 
$\parallel (c_{j_0}^1,\ldots ,c_{j_0}^n)\parallel_p =r_p(D)$, there is an element $x\in M_{j_0}$ such that $\parallel x\parallel_p=1$
and, for all $(i_1,\ldots ,i_m)\in I_m$,
$$
T_{i_1}\ldots T_{i_m}(x)= D_{i_1}\ldots D_{i_m}(x)=\prod_{1\le k\le m}  c_{j_0}^{i_k}x.
$$
Thus, by the previous observation,

\begin{align*}
r_p(T)^m &=r_p(D)^m=\parallel (c_{j_0}^1,\ldots ,c_{j_0}^n)\parallel_p^m =(\sum_{(i_1,\ldots ,i_m)\in I_m}\prod_{k=1}^m \mid c_{j_0}^{i_k}\mid^p)^{1/p}\\
         &=(\sum_{(i_1,\ldots , i_m)\in I_m}\parallel T_{i_1}\ldots T_{i_m}(x)\parallel_p^p)^{1/p}\le \parallel T^m\parallel_p,\\
\end{align*}
which implies that,
$$
r_p(T)\le \varrho_p(T).
$$
\end{proof}
\indent In order to prove that $\varrho_p(T)\le r_p(T) $ we need some prepartion.
We begin by studing the form of the a product of $m$ upper triangular matrices
with constant diagonal entries.\par

\indent Let $T=(T_1,\ldots ,T_n)$ be $n$-tuple of $b\times b$ upper triangular matrices
whose diagonal entries coincide, i.e., for each $i$ ($1\le i\le n$) there is a $c^i\in \mathbb{C}$
such that $(T_i)_{ii}=c^i$ for all $1\le t\le b$. Let $m\in \mathbb{N}$ and  $1\le i_k\le n$  for $1\le k\le m$. Then
$(T_{i_1}\ldots T_{i_m})_{st}=0$
if $1\le t<s\le b$, and $(T_{i_1}\ldots T_{i_m})_{st}=\prod_{k=1}^m c^{i_k}$ if $s=t$, .     
As $T_i$ are upper triangular, a straightforward calculation shows that
if $1\le s<t\le b$ then,
$$
(T_{i_1}\ldots T_{i_m})_{st}=\sum_{(h_0,\ldots , h_m)\in J}\prod_{1\le k\le m}(T_{i_k})_{h_{k-1}h_k},
$$
 
\noindent where $J=\{ (h_0,\ldots ,h_m)\colon s=h_0\le \ldots\le h_m=t\}$.
Decompose $J$ as $\cup_{0\le q\le m-1} J_q$, where $J_q=\{ (h_0,\ldots ,h_m)\in J\colon \hbox{ there are q-indices $k$ with } h_k=h_{k+1}, \hbox{} 0\le k\le m-1\}$.  
As $t>s$, there can not be $m$ such $k$. Moreover, if $J_q\ne \emptyset$ there are $m-q$ pairs
 $(k, k+1)$, $0\le k\le m-1$, such that $h_k\ne h_{k+1}$, which
implies that $t-s\ge m-q$. Thus, $J=\cup_{m-t+s\le q\le m-1} J_q$. \par

\indent For if $1\le q\le t-s$, we may represent $J_{m-q}$ as, 

\begin{align*}
J_{m-q}=\{ & (s,\ldots ,s,s+l_1,\ldots ,s+l_1,s+l_2, \ldots ,s+l_2,\ldots ,s+l_{q-1},\ldots ,s+l_{q-1},t,\ldots t)\colon\\
           & 1\le l_1 < \ldots < l_{q-1}\le t-s-1\},\\\end{align*}

\noindent where the jumps occur at the index $k_u$, $1\le u\le q$, and  $1\le k_1<\ldots < k_q\le m-1$. With this representation
is easy to see that $J_{m-q}$ has $\binom{m-1}{q}\binom{t-s-1}{q-1}$ elements, and that
$$
(T_{i_1}\ldots T_{i_m})_{st}=\sum_{1\le q\le t-s}\sum_{K_q,L_q}\prod_{k=1, k\ne k_u}^m  c^{i_k}\prod_{u=1}^q (T_{i_{k_u}})_{s+l_{u-1} s+l_u},
$$
\noindent where $K_q=\{(k_1,\ldots ,k_q)\colon 1\le k_1< \ldots <k_q\le m-1\}$ and $L_q=\{(l_1,\ldots ,l_{q-1})\colon 1\le l_1<\ldots < l_{q-1}\le t-s-1\}$, $1\le q\le t-s$.\par 

\indent Now we prove the reverse inequality of Proposition 3.3 for the case under
consideration.\par 
\indent If $\parallel y \parallel_p\le 1$, then
\begin{align*}
\parallel T_{i_1}\ldots T_{i_m}(y)\parallel_p^p &=\parallel (\sum_{v=1}^{b} (T_{i_1}\ldots T_{i_m})_{wv}y_v)_{1\le w\le b}\parallel_p^p\\
                                                & \le (\sum_{w=1}^{b} (\sum_{v=w}^{b} \mid (T_{i_1}\ldots  T_{i_m})_{wv}\mid)^p)\parallel y\parallel_p^p.\\\end{align*}

\noindent If $w=v$, we have that $\mid (T_{i_1}\ldots  T_{i_m})_{wv}\mid=\prod_{k=1}^m\mid c^{i_k} \mid$.
On the other hand, if $1\le w<v\le b$, as there is constant $R_1$, $R_1>1$, such that
$\mid (T_i)_{st}\mid \le R_1$ for all $s$, $t$ and $i$ with  $1\le s, t\le b$ and $1\le i\le n$, we have 
$$
\mid (T_{i_1}\ldots  T_{i_m})_{wv}\mid\le R_1^{b-1} (b-1)!\sum_{1\le q\le v-w}\sum_{(k_1,\ldots ,k_q)\in K_q} \prod_{k=1, k\ne k_u}^m \mid c^{i_k}\mid,
$$
\noindent Now, if $w_0$ and $v_0$ are such that  $\mid T_{1_1}\ldots T_{i_m}\mid_{wv}\le\mid T_{i_1}\ldots T_{i_m}\mid_{w_0v_0}$
for all $w$, $v$ with $1\le w<v\le b$,
we obtain 

$$
\parallel T_{i_1}\ldots  T_{i_m}(y)\parallel_p^p\le C\parallel y\parallel_p^p(\prod_{k=1}^m\mid c^{i_k}\mid^p + (\sum_{1\le q\le v_0-w_0}\sum_{(k_1\ldots ,k_q)\in K_q } \prod_{k=1, k\ne k_u}^m \mid c^{i_k}\mid)^p),
$$
where $C=R_1^{p(b-1)} bb!^p$.\par
\indent  Observe that if there are $l$ indices, $i_1,\ldots  ,i_l$ such that
$c^{i_j}=0$, and if $l> v_0-w_0$, then $T_{i_1}\ldots T_{i_m}=0$.\par
\indent Set $\overline{I_m}=\cup_{0\le r\le \tilde b }I^r_m$,
where $\tilde{b} =\hbox{\rm }min\{ b-1, m\}$ and  
$$
I^r_m=\{(i_1,\ldots ,i_m)\in I_m\colon \hbox{ there are $r$-indices $l$ such that
$c^{i_l}=0$} \}.
$$
Then, if $(i_1,\ldots ,i_m)\in I_m^0$, we have 

$$
\parallel T_{i_1} \ldots T_{i_m}\parallel _p^p\le C\parallel y\parallel_p^p\prod^m_{k=1}\mid c^{i_k}\mid^p(1+(\sum_{1\le q\le v_0-w_0}\sum_{(k_1\ldots ,k_q)\in K_q}\prod_{u=1}^q\frac{1}{\mid c^{i_{k_u}}\mid})^p).
$$
\noindent If $R_2\ge \hbox{\rm max}\{R_1,\frac{1}{\mid c^i\mid}\colon c^i\ne 0, 1\le i\le n\}$,
then

$$ 
\parallel T_{i_1}\ldots  T_{i_m}(y)\parallel_p^p\le C_0\parallel y\parallel_p^p\prod_{k=1}^m\mid c^{i_k}\mid^p ,
$$
\noindent where $C_0=2^pC(b-1)^p(m-1)^{(b-1)p}R_2^{p^2}$.

\indent Moreover, if $(i_1,\ldots ,i_m)\in I^r_m$ ($1\le r\le \tilde{b}$), a similar
calculation gives 
 
$$
\parallel T_{i_1}\ldots  T_{i_m}(y)\parallel_p^p\le C_r\parallel y\parallel_p^p\prod_{k=1, k\ne l}^m\mid c^{i_k}\mid^p, 
$$
\noindent where $i_l$ ($1\le l\le r$), are such that $c^{i_l}=0$, and $C_r=2^pC(b-r)^p(m-1)^{(b-1)p}R_2^{p^2}\le C_0$, 
for all $1\le r\le \tilde{b}$.\par
\indent Now,   $m\ge b$, an easy calculation shows that

\begin{align*}
\parallel T^m\parallel_p^p&=\sup_{\parallel y\parallel_p\le 1} (\sum_{(i_1,\ldots ,i_m)\in I_m}\parallel T_{i_1}\ldots T_{i_m}(y)\parallel_p^p)\\
                          &=\sup_{\parallel y\parallel_p\le 1} (\sum_{0\le r\le b-1}\sum_{(i_1,\ldots ,i_m)\in I_m^r}\parallel T_{i_1}\ldots T_{i_m}(y)\parallel_p^p)\\
                          &\le C_0\sum_{0\le r\le b-1}\sum_{(i_1,\ldots ,i_m)\in I_m^r} \prod^m_{k=1, c^{i_k}\ne 0}\mid c^{i_k}\mid^p\\
                          &=C_0\sum_{0\le r\le b-1} h^r\prod_{j=0}^{r-1}(m-j)\sum_{(i_1,\ldots ,i_{m-r})\in I_{m-r}} \prod^m_{k=1}\mid c^{i_k}\mid^p,\\\end{align*}

\noindent where $h=\sharp\{ l\colon c^l=0, 1\le l\le n\}$.\par
\indent By the observation which precedes Proposition 3.3,
$$
\parallel T^m\parallel_p^p\le C_0h^{b-1}m^br_p(D)^{p(m-b+1)}(\sum_{j=0}^{b-1}r_p(D)^{pj}).
$$
Thus, for $m\ge b$ there is a constant $K>1$ such that
$$
\parallel T^m\parallel_p^{1/m}\le K^{1/m}(m^{1/m})^{2b}r_p(D)^{(m-b+1)/m},
$$
which implies that
$$
\varrho_p(T)\le r_p(D)= r_p(T),
$$
and by Proposition 3.3 we obtain 
$$
\varrho_p(T)=r_p(D)=r_p(T).
$$
\indent Let us now return to the general case. For an $n$-tuple $T$  as specified after Proposition 3.2, and for $s=1$, we have
just seen that $\varrho_p(T)=r_p(T)$. We now prove the general case. Let us begin
by the following observation. By Proposition 3.2, as we may assume that the subspaces $M_j$ ($1\le j\le s$),
are generated by elements of the canonical basis of $\mathbb{C}^d$, if  $y=(y_j)_{1\le j\le s}\in \mathbb{C}^d$ and 
$\parallel y\parallel_p\le 1$, then 
                           
$$
\parallel T_{i_1}\ldots T_{i_m}(y)\parallel_p^p=\parallel \sum_{1\le j\le s} T_{i_1}\ldots T_{i_m}(y_j)\parallel_p^p=\sum_{1\le j\le s}\parallel T_{i_1}\ldots T_{i_m}(y_j)\parallel_p^p.
$$

\indent Now, as $\parallel y_j\parallel_p\le \parallel y\parallel_p\le 1$,
for $1\le j\le s$, we may proceed as follows.\par

\begin{align*}
\parallel T^m\parallel_p   &=\sup_{\parallel y\parallel_p\le 1}(\sum_{(i_1,\ldots ,i_m)\in I_m} \parallel T_{i_1}\ldots T_{i_j}(y)\parallel_p^p)^{1/p}\\  
                           &\le \sup_{\parallel y\parallel_p\le 1} (\sum_{(i_1,\ldots ,i_m)\in I_m} \sum_{1\le j\le s}\parallel T_{i_1}\ldots T_{i_m}(y_j)\parallel_p^p)^{1/p}\\     
                           &\le\sum_{1\le j\le s}\parallel (T^j)^m\parallel_p\\
                           &\le s\parallel (T^{j_0})^m\parallel_p,\\\end{align*}

\noindent where $T^j$ is the $n$-tuple defined by $T_i^j=T_i\mid M_j$ ($1\le i\le n$), and $j_0$ is such that
$\parallel(T^j)^m\parallel_p\le\parallel (T^{j_0})^m\parallel_p$ for all $j$.\par
\indent Thus, if $d_{j_0}=\dim M_{j_0}$, then, there is a constant $K_0$ such that,

\begin{align*}
\parallel T^m\parallel_p^{1/m}&\le s^{1/m}K_0^{1/m}{(m^{1/m})}^{2d_{j_0}}r_p(D^{j_0})^{(m-d_j+1)/m}\\
                           &\le s^{1/m}K_0^{1/m}{(m^{1/m})}^{2d_{j_0}}r_p(D)^{(m-d_j+1)/m},\\           \end{align*}    

\noindent which implies that,
$$
\varrho_p(T)\le r_p(D)=r_p(T),
$$
  
Thus,
$$
\varrho_p(T)=r_p(T).
$$

\indent  We have thus proved our main theorem.\par

\begin{thm}Let $L$ be a complex nilpotent Lie algebra of linear transformations in ${\mathcal L}(\mathbb{C}^d)^{op}$,
and $\{x_i\}_{1\le i\le n}$  a Jordan-H\"older  basis of $L$. Then,
$$
\varrho_p ((x_i)_{1\le i\le n})=r_p((x_i)_{1\le i\le n}).
$$
Moreover, if $T=(T_1,\ldots ,T_n)$ is an $n$-tuple of $d\times d$-commuting matrices,
then,
$$
\varrho_p(T)=r_p(T),
$$
where $1\le p\le \infty$.\end{thm}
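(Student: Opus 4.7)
The plan is to deduce both claims by reducing each to the setting studied just before Proposition 3.3, namely an $n$-tuple $T$ of matrices on $\mathbb{C}^d$ admitting an invariant decomposition $\mathbb{C}^d=\oplus_{j=1}^s M_j$ and a basis of each $M_j$ in which every $T_i|_{M_j}$ is upper triangular with constant diagonal entries. For such an $n$-tuple, the two inequalities $r_p(T)\le\varrho_p(T)$ (Proposition 3.3) and $\varrho_p(T)\le r_p(T)$ (the estimate obtained by expanding a product $T_{i_1}\ldots T_{i_m}$ of upper triangular matrices and bounding off-diagonal contributions against the diagonal product $\prod_k|c^{i_k}|$) together give $\varrho_p(T)=r_p(T)$. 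So the theorem will follow once each of the two situations is put into this standard form.

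For the Lie algebra statement, I would first use Proposition 3.1 to restrict to $1\le p<\infty$. Given a Jordan-H\"older basis $\{x_i\}_{1\le i\le n}$ of $L$, Theorem A identifies $Sp(L,\mathbb{C}^d)$ with the set of weights of $L$ on $\mathbb{C}^d$, so properties (ii) and (iv) of the weight space theory provide a direct sum decomposition $\mathbb{C}^d=\oplus_j M_j$ into weight spaces and a basis of each $M_j$ in which each $x_i|_{M_j}$ is upper triangular with constant diagonal $\alpha_j(x_i)$. By Proposition 3.2, conjugating by the corresponding change of basis leaves both $r_p$ and $\varrho_p$ unchanged, so we may assume the basis is the canonical one and the $M_j$ are spanned by canonical basis vectors. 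The $n$-tuple $(x_i)_{1\le i\le n}$ then satisfies exactly the hypotheses fixed before Proposition 3.3, and the two inequalities yield the result.

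For the commuting case, I would exploit the observation already made in the excerpt: the linear span $\tilde L$ of the commuting matrices $T_1,\ldots,T_n$ is an abelian, hence nilpotent, Lie subalgebra of $\mathcal{L}(\mathbb{C}^d)^{op}$, and the $T_i$ form a (trivially Jordan-H\"older) basis of $\tilde L$ because all brackets vanish. Applying the weight space theory to $\tilde L$ on $\mathbb{C}^d$ gives a simultaneous block upper triangularization of the $T_i$ with constant diagonal on each weight space, which is again the standard form. Hence the argument of the first part applies verbatim and delivers $\varrho_p(T)=r_p(T)$.

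The main obstacle is the upper bound $\varrho_p(T)\le r_p(T)$ in the standard form, which is where all the combinatorial labor lives. The key step is to show that, for large $m$, the entries of a product $T_{i_1}\ldots T_{i_m}$ of $b\times b$ upper triangular matrices with constant diagonals $c^{i_k}$ are controlled by $\prod_k|c^{i_k}|$ up to a factor polynomial in $m$; one must carefully handle the terms in which some $c^{i_k}$ vanish by stratifying $I_m$ according to the number of zero diagonal indices and noting that products with more than $b-1$ such indices vanish outright. Once this is in hand, summing over $I_m$ produces an estimate $\|T^m\|_p^{1/m}\le K^{1/m}(m^{1/m})^{2b}r_p(D)^{(m-b+1)/m}$ whose limit as $m\to\infty$ is $r_p(D)=r_p(T)$; passing from the single-block case $s=1$ to general $s$ is then a routine estimate by summing over the invariant summands $M_j$.
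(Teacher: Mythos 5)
Your proposal is correct and follows essentially the same route as the paper: the paper's proof of the theorem is precisely the reduction to the block upper triangular standard form via Propositions 3.1--3.3 (with Theorem A and the weight space decomposition supplying that form for a Jordan--H\"older basis, and the span $\tilde L$ handling the commuting case), combined with the combinatorial estimate $\parallel T^m\parallel_p^{1/m}\le K^{1/m}(m^{1/m})^{2b}r_p(D)^{(m-b+1)/m}$ carried out before the theorem's statement. No substantive differences to report.
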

\begin{proof}
\indent It is a consequence of Propositions 3.1-3.3 and the above calculations.\par
\end{proof}
\indent Finally, we consider several examples  to show that our result fails
in the solvable non nilpotent case.  \par
\indent  Consider a complex solvable finite dimensional Lie algebra of
linear transformations, $L$, acting on a complex finite dimensional vector space $E$. By [5, Chapter V, Section 1, Proposition 2],  as
in the nilpotent case, we may construct a Jordan-H\"older  sequence of ideals and a
Jordan-H\"older basis for $L$ (see Section 2). However, in the solvable non nilpotent case, Theorem A is
no longer true, i.e., if $\dim L=n$ and $\{x_i\}_{1\le i\le n}$ is a Jordan-H\"older basis
of $L$, $Sp((x_i)_{1\le i\le n},E)\ne \sigma_{pt}((x_i)_{1\le i\le n})$.
Thus, we may consider $\hbox{\rm max}\{\mid\lambda\mid_p\colon \lambda\in Sp((x_i)_{1\le i\le n},E)\}$
instead of $r_p((x_i)_{1\le i\le n})$, and  try to see if $\varrho_p((x_i)_{1\le i\le n})=\hbox{\rm max}\{\mid\lambda\mid_p:
\lambda\in Sp((x_i)_{1\le i\le n}, E)\}$. However, as we shall see, this  equality
also fails.\par
\indent  In the following examples we consider the space $E=\mathbb{C}^2$, and the solvable non nilpotent algebras
we work with are representations in ${\mathcal L}(\mathbb{C}^2)^{op}$ of the two dimensional
solvable algebra $L=<y>\oplus<x>$, with  bracket $[x,y]=y$. Obseerve that  
$(y,x)$ is a Jordan-H\"older basis of $L$. \par
\indent As our first example consider the algebra $L_1\subseteq {\mathcal L}(\mathbb{C}^2)^{op}$,
with Jordan-H\"older  basis,
$$
y=\begin{pmatrix} 0&2\\
           0&0\\\end{pmatrix} ,    \hskip2cm x=\begin{pmatrix} -1/2&0\\
                                                         0&1/2\\\end{pmatrix} . 
$$

Then
$$
\sigma_{pt}((y,x))=\{(0,-1/2)\}, \hskip2cm Sp((y,x),E)=\{(0,1/2), (0,-3/2)\},
$$
and 
$$
r_{\infty}((y,x))=1/2=\varrho_{\infty}((y,x)),\hbox{ but } \hbox{\rm max}\{\mid\lambda\mid_{\infty}\colon \lambda\in Sp((y,x),E)\}=3/2.
$$

\indent On the other hand, if $L_2\subseteq{\mathcal L}(\mathbb{C}^2)^{op}$ is the algebra
with Jordan-H\"older basis,
$$
y=\begin{pmatrix} 0&1\\
           0&0\\\end{pmatrix} ,\hskip2cm x=\begin{pmatrix} 2&0\\
                                                  0&3\\\end{pmatrix},
$$
we obtain 

$$
\sigma_{pt}((y,x))=\{(0,2)\}, \hskip2cm Sp((y,x),E)=\{(0,1), (0,3)\},
$$
and 
$$
r_{\infty}((y,x))=2,\hbox{ but }\varrho_{\infty}((y,x))=3=\hbox{\rm max}\{\mid\lambda\mid_{\infty}\colon\lambda\in Sp((y,x),E)\}.
$$ 
\indent Our last example is the algebra $L_3\subseteq{\mathcal L}(\mathbb{C}^2 )^{op}$  generated
by,
$$
y=\begin{pmatrix} 0&1\\
           0&0\\\end{pmatrix} ,    \hskip2cm x=\begin{pmatrix} -1/3&0\\
                                                         0&2/3\\\end{pmatrix} . 
$$

Then, 
$$
\sigma_{pt}((y,x))=\{(0,-1/3)\}, \hskip2cm Sp((y,x),E)=\{(0,-4/3), (0,2/3)\},
$$
and 
$$
r_{\infty}((y,x))=1/3,\hskip1cm \varrho_{\infty}((y,x))=2/3,\hskip1cm \hbox{\rm max}\{\mid\lambda\mid_{\infty}\colon\lambda\in Sp((y,x),E)\}=4/3.
$$

\bibliographystyle{amsplain}

\begin{thebibliography}{99}
\bibitem{ }R. Bhatia and T. Bhattacharyya, On the joint spectral radius of commuting matrices,
Studia Math. 114 (1995), 29-38.

\bibitem{ }E. Boasso, Dual properties and joint spectra for solvable
Lie algebras of operators, J. Operator Theory 33 (1995), 105-116.

\bibitem{ } E. Boasso, Joint spectra and nilpotent Lie algebras of Linear transformations, Linear Algebra Appl. 263 (1997), 49-62.

\bibitem{ } E. Boasso and A. Larotonda, A spectral theory for solvable
Lie algebras of operators, Pacific J. Math. 158  (1993), 15-22.

\bibitem{ }N. Bourbaki,  \'El\'ements de Math\'ematique, Groupes et Alg\`ebres de Lie,
Alg\`ebres de Lie Fasc. XXVI, 1960.

\bibitem{ } M. Ch\={o} and T. Huruya, On the joint spectral radius, Proc. Roy. Irish Acad.
Sect. A 91(1991), 39-44.

\bibitem{ } M. Ch\={o} and M. Takaguchi, Identity of Taylor's joint spectrum and Dash's joint spectrum, Studia Math. 70 (1982), 225-229.

\bibitem{ } N. Jacobson, Lie Algebras, Interscience Publishers, 1962.

\bibitem{ } A. McIntosh, A. Pryde and W. Ricker, Comparison of joint spectra
for certain classes of commuting opertors, Studia Math. 88 (1988),  23-36.

\bibitem{ } C. Ott, A note on a paper of E. Boasso and A. Larotonda, 
Pacific  J. Math. 173 (1996), 173-179.
 
\bibitem{ } Z. S\l odkowski, An infinite family of joint spectra, Studia
Math. 61 (1973), 239-255.

\bibitem{ } J.L.Taylor, A joint spectrum for several commuting operators,
J. Funct. Anal. 6 (1970), 172-191.


\end{thebibliography}

\vskip.5cm
\noindent Enrico Boasso\par
\noindent E-mail address: enrico\_odisseo@yahoo.it
\end{document}